\DeclareMathOperator{\Hom}{Hom}
\DeclareMathOperator{\Rep}{Rep}
\numberwithin{equation}{subsection}
\newtheorem{theorem}{Theorem}[subsection]
\newtheorem{corollary}[theorem]{Corollary}
\newtheorem{lemma}[theorem]{Lemma}
\newtheorem{proposition}[theorem]{Proposition}
\theoremstyle{definition}
\begin{document} 

\title[Comodules for some simple $\mathcal O$-models for $\mathbb G_m$]
{Comodules for some simple $\mathcal O$-forms of $\mathbb G_m$}

\author{N.\ E.\ Csima}
\address{N.\ E.\ Csima\\Department of Mathematics\\ University of Chicago\\
5734 University Avenue\\ Chicago, Illinois 60637}
\curraddr{}
\email{ecsima@math.uchicago.edu} 

\author{R.\ E.\ Kottwitz}
\address{Robert E. Kottwitz\\Department of Mathematics\\ University of Chicago\\ 5734 University
Avenue\\ Chicago, Illinois 60637}

\email{kottwitz@math.uchicago.edu}
\thanks{Partially supported by NSF Grant DMS-0245639}

\subjclass{Primary 16W30; Secondary 14L17}
%\date{}

\begin{abstract} 
This paper gives a rather concrete description of the category $\Rep(G)$ for
certain flat commutative affine group schemes $G$ over a discrete valuation
ring such that the general fiber of $G$ is the multiplicative group. 
\end{abstract}

\maketitle

Tannakian theory allows one to understand an affine group scheme $G$ over a
commutative base ring $A$ in terms of the category $\Rep(G)$ of $G$-modules,
by which is meant comodules for the Hopf algebra corresponding to $G$. The
theory is especially well-developed \cite{saavedra} in the case that $A$ is
a field, and some parts of the theory still work well over more general
rings $A$, say discrete valuation rings (see \cite{saavedra,wedhorn}). 

When $A$ is a field of characteristic zero and $G$ is connected reductive,
the category $\Rep(G)$ is very well understood. However, with the exception
of groups as simple as the multiplicative and additive groups, little seems
to be known about what $\Rep(G)$ looks like concretely when $A$ is no longer
assumed to be a field, even in the most favorable case in which $A$ is a
discrete valuation ring and $G$ is a flat affine group scheme over $A$ with
connected reductive general fiber. 

The modest goal of this paper is to give a  concrete description of
$\Rep(G)$ for certain flat group schemes $G$ over a discrete valuation ring
$\mathcal O$ such that the general fiber of $G$ is $\mathbb G_m$. 

Choose a generator $\pi$ of the maximal ideal of $\mathcal O$ and write $F$
for the field of fractions of $\mathcal O$. For any non-negative integer $k$,
the construction of \ref{sec.gs}, when applied to $f=\pi^k$, yields a
commutative flat affine group scheme $G_k$ over $\mathcal O$ whose general
fiber is $\mathbb G_m$. The $\mathcal O$-points of $G_k$ are given by 
\[
G_k(\mathcal O)=\{t \in \mathcal O^\times\,:\,t\equiv1 \mod{\pi^k}\}.
\]
These form a projective system 
\[
\dots \to G_2 \to G_1 \to G_0=\mathbb G_m
\]
in an obvious way, and we may form the projective limit $G_\infty:=\projlim
G_k$. The Hopf algebra $S_k$ corresponding to $G_k$ can be described
explicitly (see \ref{sec.gs} and \ref{sec.sbr}). The Hopf algebra $S_\infty$
corresponding to $G_\infty$ is 
\[
\injlim S_k=\{\sum_{i \in \mathbb Z} x_iT^i \in F[T,T^{-1}]\,:\, \sum_{i \in
\mathbb Z} x_i \in \mathcal O\}.
\] 

The categories $\Rep(G_\infty)$ and $\Rep(G_k)$ can be described very
concretely. Indeed, $\Rep(G_\infty)$ consists of the category of $\mathcal
O$-modules $M$ equipped with a $\mathbb Z$-grading on $F\otimes_\mathcal O
M$ (see \ref{sec.dvr}, where a much more general result is proved). As for
$\Rep(G_k)$, we proceed in two steps.

First, the full subcategory of $\Rep(G_k)$ consisting of those $G_k$-modules
that are flat as $\mathcal O$-modules is equivalent (see Theorem
\ref{thm.main}) to the category of pairs $(V,M)$ consisting of a $\mathbb
Z$-graded $F$-vector space $V$ and an \emph{admissible} $\mathcal
O$-submodule
$M$ of $V$, where admissible means that the canonical map $F\otimes_\mathcal
O M \to V$ is an isomorphism and $C_nM \subset M$ for all $n\ge 0$, where
$C_n:V\to V$ is the graded linear map given by multiplication by
$\pi^{kn}\binom{i}{n}$ on the $i$-th graded piece of $V$. The $G_k$-module
corresponding to $(V,M)$ is $M$, equipped with the obvious comultiplication. 

Second, any $G_k$-module (see \ref{sec.pid}) is obtained as the cokernel of
some injective homomorphism $M_1 \to M_0$ coming from a morphism $(V_1,M_1)
\to (V_0,M_0)$ of pairs of the type just described.

When $\mathcal O$ is a $\mathbb Q$-algebra, the situation is even simpler:
$M$ is an admissible $\mathcal O$-submodule of the graded vector space $V$
if and only if $C_1M \subset M$ and $F\otimes_\mathcal O M \cong V$.
Moreover, in case $\mathcal O$ is the formal power series ring $\mathbb
C[[\varepsilon]]$, there is an interesting connection with affine Springer
fibers (see \ref{sec.asf}).

\section{A description of $\Rep(G)_f$ for certain group schemes $G$}  
Throughout this section we consider a commutative ring $A$ and a
nonzerodivisor $f \in A$. Thus the canonical homomorphism $A \to A_f$ is
injective, where $A_f$ denotes the localization of $A$ with respect to the
multiplicative subset $\{f^n:n\ge0\}$. For the rest of this section we
denote $A_f$ by $B$ and use the canonical injection $A \hookrightarrow B$ to
identify $A$ with a subring of $B$. 

\subsection{The group scheme $G$ over $A$} \label{sec.gs}
We are now going to define a
commutative affine group scheme $G$, flat and finitely presented over $A$.
There will be a canonical homomorphism $G \to \mathbb G_m$ that becomes an
isomorphism after extending scalars from $A$ to $B$. 

We begin by specifying the functor of points for $G$. For any commutative
$A$-algebra $R$ we put 
\begin{align*}
G(R):&=\{(t,x) \in R^\times \times R: t-1=fx\} \\ 
&=\{x \in R: 1+fx \in R^\times\}.
\end{align*}
Then $G$ is represented by the $A$-algebra 
\begin{align}
\label{eq.dag} S:&=A[T,T^{-1},X]/(T-1-fX)\\ 
&=A[X]_{1+fX}, \notag
\end{align} 
which is clearly flat and finitely presented. 

The multiplication on $G(R)$ is defined as $(t,x)(t',x')=(tt',x+x'+fxx')$.
The identity element is $(1,0)$ and the inverse of $(t,x)$ is
$(t^{-1},-t^{-1}x)$. 

There is a canonical homomorphism $\lambda:G \to \mathbb G_m$, given by
$(t,x)
\mapsto t$. When $f$ is a nonzerodivisor in $R$, the
homomorphism $\lambda:G(R) \to R^\times$ identifies $G(R)$ with
$\ker[R^\times
\to (R/fR)^\times]$, and when $f$ is a unit in $R$, then $G(R)=R^\times$,
showing that the homomorphism $\lambda:G \to \mathbb G_m$ becomes an
isomorphism after extending scalars from $A$ to $B$. Thus there is a
canonical isomorphism $B\otimes_A S\cong B[T,T^{-1}]$. 

\begin{lemma}\label{lem.flat}
Let $M$ be an $A$-module  on which $f$ is a
nonzerodivisor. Let $F$ be any flat $A$-module. Then  $f$ is also a
nonzerodivisor on
$F\otimes_A M$. 
\end{lemma}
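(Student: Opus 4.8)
The plan is to rephrase both the hypothesis and the conclusion in terms of a single $A$-linear map and then invoke flatness. For an $A$-module $N$, to say that $f$ is a nonzerodivisor on $N$ is exactly to say that the endomorphism $\mu_{f,N}\colon N\to N$, $n\mapsto fn$, is injective. So the hypothesis is that $\mu_{f,M}\colon M\to M$ is injective, and what must be shown is that $\mu_{f,F\otimes_A M}\colon F\otimes_A M\to F\otimes_A M$ is injective.

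First I would use flatness of $F$ in the form: the functor $F\otimes_A(-)$ preserves injections. Applying it to the injection $\mu_{f,M}\colon M\hookrightarrow M$ gives that $\id_F\otimes\,\mu_{f,M}\colon F\otimes_A M\to F\otimes_A M$ is injective. Second, I would identify this map with $\mu_{f,F\otimes_A M}$: on a simple tensor, $(\id_F\otimes\,\mu_{f,M})(a\otimes m)=a\otimes fm=f(a\otimes m)$, and since both $\id_F\otimes\,\mu_{f,M}$ and $\mu_{f,F\otimes_A M}$ are additive and agree on the generators $a\otimes m$ of $F\otimes_A M$, they coincide. Combining the two steps, $\mu_{f,F\otimes_A M}$ is injective, i.e.\ $f$ is a nonzerodivisor on $F\otimes_A M$.

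There is essentially no obstacle here: the argument is just "flat functors preserve injections," together with the elementary but worth-stating observation that multiplication by $f$ on $F\otimes_A M$ is the base change of multiplication by $f$ on $M$. (Alternatively one could route the same argument through $\mu_{f,F}\otimes\id_M$ and the flatness of $M$, but using the flatness of $F$ exactly as stated is the natural path, and it is the version needed later via Lemma~\ref{lem.flat} applied with $M=S$ and $F=B$ or similar.)
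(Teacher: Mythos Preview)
Your proof is correct and is exactly the paper's argument: tensor the injection $M\xrightarrow{f}M$ with the flat module $F$. One small caveat about your parenthetical aside: the alternative route via $\mu_{f,F}\otimes\id_M$ would require $M$ flat (which is not assumed) rather than $F$ flat, and in the subsequent corollary the lemma is actually applied with $M=A$ and the flat module $S$, not with $M=S$; but this does not affect your proof itself.
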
 
\begin{proof}
Tensor the injection $M \xrightarrow{f} M$ over $A$ with $F$. 
\end{proof} 
\begin{corollary}
The canonical homomorphism $S \to B\otimes_A S=B[T,T^{-1}]$ is injective, so
that we may identify $S$ with a subring of $B[T,T^{-1}]$. 
\end{corollary}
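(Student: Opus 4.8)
The plan is to reduce the statement to the elementary fact that localizing a module at a nonzerodivisor produces an injection. Concretely, the canonical homomorphism $S \to B\otimes_A S$ is, by the very definition of $B=A_f$, the localization map $S \to S_f$, since $B\otimes_A S = A_f\otimes_A S = S_f$. This map is injective precisely when $f$ acts as a nonzerodivisor on $S$, so the whole corollary comes down to checking that single fact.

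To verify that $f$ is a nonzerodivisor on $S$, I would apply Lemma \ref{lem.flat} with $M=A$ and with the flat $A$-module taken to be $S$ itself. By hypothesis $f$ is a nonzerodivisor in $A$, i.e.\ on the $A$-module $A$; and $S$ was noted above to be flat (indeed finitely presented) over $A$. Lemma \ref{lem.flat} then gives that $f$ is a nonzerodivisor on $S\otimes_A A$, and the canonical identification $S\otimes_A A\cong S$ transports this to $S$.

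Once $f$ is known to be a nonzerodivisor on $S$, the localization homomorphism $S\to S_f$ is injective (the kernel of $S\to S_f$ consists of elements annihilated by some power of $f$, and there are none). Composing with the identification $S_f\cong B\otimes_A S$ and with the isomorphism $B\otimes_A S\cong B[T,T^{-1}]$ established at the end of \ref{sec.gs} yields the asserted embedding $S\hookrightarrow B[T,T^{-1}]$.

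I do not expect any real obstacle here: the argument is essentially a bookkeeping exercise once Lemma \ref{lem.flat} is in hand, the only thing worth stating carefully being the reduction of ``injective on the tensor product'' to ``$f$ is a nonzerodivisor on $S$''. As an alternative one could bypass the lemma and argue directly from the explicit presentation $S=A[X]_{1+fX}$, observing that $A[X]$ is a free, hence flat and $f$-torsion-free, $A$-module and that further localization at $1+fX$ preserves this; but the route through Lemma \ref{lem.flat} is cleaner and keeps the exposition uniform.
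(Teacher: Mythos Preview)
Your proposal is correct and follows essentially the same route as the paper: apply Lemma~\ref{lem.flat} with $M=A$ and the flat $A$-module $S$ to see that $f$ is a nonzerodivisor on $S\otimes_A A=S$, and then conclude that $S\to B\otimes_A S$ is injective. The paper's proof is just a terser version of what you wrote.
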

\begin{proof}
Just note that $S$ is flat over $A$ and $f$ is a nonzerodivisor on $A$.
Therefore $f$ is a nonzerodivisor on $S\otimes_A A=S$, and this means that
$S \to B \otimes_A S$ is injective. 
\end{proof} 

\subsection{Description of $S$ as a subring of $B[T,T^{-1}]$}\label{sec.sbr} 
 We have just identified $S$ with a subring
of $B[T,T^{-1}]$. It is obvious from
\eqref{eq.dag} that $S$ is the $A$-subalgebra of $B[T,T^{-1}]$ generated by
$T,T^{-1},\frac{T-1}{f}$. However there is a more useful description of $S$
in terms of $B$-module maps 
\[
L_n:B[T,T^{-1}] \to B,
\] 
one for each non-negative integer $n$, defined by the formula 
\[
L_n\bigl (\sum_{i \in \mathbb Z} b_iT^i\bigr)=\sum_{i \in \mathbb
Z}f^n\binom{i}{n}b_i.
\] 
Here $\binom{i}{n}$ is the binomial coefficient $i(i-1)\dots(i-n+1)/n!$,
defined for all $i \in \mathbb Z$. When $n=0$, we have $\binom{i}{n}=1$ for
all $i \in \mathbb Z$. 

The following remarks may help in understanding the maps $L_n$. For any
non-negative integer $n$, we have the divided-power differential operator
\[
D^{[n]}:B[T,T^{-1}] \to B[T,T^{-1}]
\] defined by 
\begin{equation}
D^{[n]}\bigl( \sum_{i \in \mathbb Z} b_iT^i\bigr)=\sum_{i\in\mathbb
Z}\binom{i}{n}b_iT^{i-n}.
\end{equation}
The Leibniz formula says that 
\begin{equation}\label{eq.dot}
D^{[n]}(gh)=\sum_{r=0}^n D^{[r]}(g)D^{[n-r]}(h). 
\end{equation}
For any $g \in B[T] \subset B[T,T^{-1}]$ the Taylor expansion of $g$ at
$T=1$ reads 
\begin{equation}\label{eq.ddot}
g=\sum_{n=0}^\infty \bigr(D^{[n]}g\bigr)(1)\cdot(T-1)^n,
\end{equation}
the sum having only finitely many non-zero terms. 

For any $g \in B[T,T^{-1}]$ we have $L_n(g)=f^n(D^{[n]}g)(1)$.  It follows
from \eqref{eq.dot} that for all $g,h \in B[T,T^{-1}]$ 
\begin{equation}\label{eq.dot'}
L_n(gh)=\sum_{r=0}^n L_r(g)L_{n-r}(h), 
\end{equation}
and for all $h \in B[T] \subset B[T,T^{-1}]$ it follows from \eqref{eq.ddot}
that 
\begin{equation}\label{eq.ddot'}
h=\sum_{n=0}^\infty L_n(h)\biggl(\frac{T-1}{f}\biggr)^n.
\end{equation} 

Now we are in a position to prove 
\begin{proposition}\label{prop.des}
The subring $S$ of $B[T,T^{-1}]$ is equal to 
\[
\{g \in B[T,T^{-1}]\,:\,\forall n \ge 0 \,\,\,\, L_n(g) \in A\}.
\]
\end{proposition}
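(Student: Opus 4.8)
The plan is to prove the two inclusions separately. Write $S'$ for the set $\{g\in B[T,T^{-1}]:L_n(g)\in A\text{ for all }n\ge0\}$.

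First I would show $S\subseteq S'$. Since $S'$ is an $A$-submodule of $B[T,T^{-1}]$ and, by the multiplicativity formula \eqref{eq.dot'}, an $A$-subalgebra (the product of two elements with $L_n$-values in $A$ again has $L_n$-values in $A$, because $A$ is a subring of $B$), it suffices to check that the three generators $T$, $T^{-1}$, $\frac{T-1}{f}$ of $S$ lie in $S'$. For $T^{\pm1}$ one computes $L_n(T^{\pm1})=f^n\binom{\pm1}{n}\in A$ directly; for $\frac{T-1}{f}$ one has $L_n\bigl(\frac{T-1}{f}\bigr)=(D^{[n]}(T-1))(1)$, which is $-1+1=0$ for $n=0$, equals $1$ for $n=1$, and $0$ for $n\ge2$, all in $A$. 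Hence every element of $S$ has all $L_n$-values in $A$, giving $S\subseteq S'$.

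For the reverse inclusion $S'\subseteq S$, take $g=\sum_{i\in\mathbb Z}b_iT^i\in B[T,T^{-1}]$ with $L_n(g)\in A$ for all $n\ge0$. Multiplying by a suitable power of $T$ (which lies in $S$ and is invertible in $S$, so does not affect membership in $S$), I may assume $g\in B[T]$. Then formula \eqref{eq.ddot'} gives the finite expansion $g=\sum_{n=0}^\infty L_n(g)\bigl(\frac{T-1}{f}\bigr)^n$. Since each coefficient $L_n(g)$ lies in $A$ and $\frac{T-1}{f}\in S$, this exhibits $g$ as an element of the $A$-subalgebra $S$ of $B[T,T^{-1}]$. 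This completes the argument.

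I expect the main obstacle to be the bookkeeping around passing from $B[T,T^{-1}]$ to $B[T]$: one must check that multiplication by $T^m$ (for $m$ large enough to clear negative powers) preserves the condition ``all $L_n$-values in $A$'' in \emph{both} directions, so that $g\in S'$ iff $T^mg\in S'$, and likewise $g\in S$ iff $T^mg\in S$ since $T$ is a unit in $S$. The first of these follows from \eqref{eq.dot'} together with the fact, established in the previous paragraph, that $L_n(T^{\pm m})\in A$ for all $n$. Everything else is a direct consequence of the identities \eqref{eq.dot'} and \eqref{eq.ddot'} already recorded in the excerpt, so the proof is essentially a matter of assembling them correctly.
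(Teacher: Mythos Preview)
Your proof is correct and follows essentially the same route as the paper's: show $S'$ is a subring via \eqref{eq.dot'}, verify that the three generators $T$, $T^{-1}$, $(T-1)/f$ lie in $S'$, and for the reverse inclusion multiply into $B[T]$ and invoke \eqref{eq.ddot'}. One small slip: the displayed formula $L_n\bigl(\tfrac{T-1}{f}\bigr)=(D^{[n]}(T-1))(1)$ is off by a factor of $f^{n-1}$, though your stated values $0,1,0,0,\dots$ are nonetheless correct.
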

\begin{proof}
Write $S'$ for $\{g \in B[T,T^{-1}]\,:\,\forall n \ge 0 \,\,\,\, L_n(g) \in
A\}$. Obviously $S'$ is an $A$-submodule of $B[T,T^{-1}]$, and it follows
from \eqref{eq.dot'} that $S'$ is a subring of $B[T,T^{-1}]$. A simple
calculation shows that $T,T^{-1},(T-1)/f$  lie in $S'$, and as these
three elements generate $S$ as $A$-algebra, we conclude that $S \subset
S'$. 

Now let $g \in S'$. There exists an integer $n$ large enough that $h:=T^mg$
lies in the subring $B[T]$. Note that $h \in S'$. Equation \eqref{eq.ddot'}
shows that
$h
\in S$, since $(T-1)/f \in S$ and $L_n(h) \in A$.  
Therefore $g=T^{-m}h \in S$.
\end{proof} 

Now let $M$ be an $A$-module on which $f$ is a nonzerodivisor, so that we
may use the canonical $A$-module map $M \to B\otimes_A M$ (sending $m$ to
$1\otimes m$) to identify $M$ with an $A$-submodule of $N:=B\otimes_A M$. 

It follows from Lemma \ref{lem.flat} that the canonical $A$-module map 
\[
S\otimes_A M \to B \otimes_A(S\otimes_AM)=B[T,T^{-1}]\otimes_B N
\] 
identifies $S \otimes_A M$ with an $A$-submodule of $B[T,T^{-1}]\otimes_B
N$. We will now derive from the previous proposition a description of
$S\otimes_A M$ inside $B[T,T^{-1}]\otimes_B N$. For this we will need the
$B$-module maps $\mathbf L_n:B[T,T^{-1}]\otimes_B N \to N$ defined by 
\[
\mathbf L_n\bigl(\sum_{i \in \mathbb Z} T^i \otimes x_i \bigr)=\sum_{i \in
\mathbb Z} f^n\binom{i}{n}x_i.
\] 
Here $x_i \in N$, all but finitely many being $0$. 
\begin{lemma}\label{lem.sm}
The $A$-submodule  $S\otimes_A M$ of $B[T,T^{-1}]\otimes_B N$ is equal to 
\[
\{x \in B[T,T^{-1}]\otimes_B N \,:\,\forall n \ge 0 \,\,\,\, \mathbf L_n(x)
\in M\}.
\]
\end{lemma}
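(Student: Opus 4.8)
The plan is to reduce Lemma~\ref{lem.sm} to Proposition~\ref{prop.des} by choosing a presentation of $M$ and exploiting right-exactness of the tensor product. First I would pick a presentation $A^{(J)} \xrightarrow{\phi} A^{(I)} \to M \to 0$ of $M$ by free $A$-modules. Tensoring with $B$ (which is flat over $A$) and with $B[T,T^{-1}] = B\otimes_A S$ is harmless on the free terms, and since $S$ is flat over $A$, tensoring the presentation with $S$ over $A$ gives a presentation $S^{(J)} \to S^{(I)} \to S\otimes_A M \to 0$, while tensoring with $B[T,T^{-1}]$ over $A$ gives $B[T,T^{-1}]^{(J)} \to B[T,T^{-1}]^{(I)} \to B[T,T^{-1}]\otimes_B N \to 0$. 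The maps $\mathbf L_n$ on these free modules are just the coordinate-wise maps built from $L_n$, so they are compatible with $\phi$; concretely, the formula defining $\mathbf L_n$ shows $\mathbf L_n \circ (\text{id}\otimes\phi) = \phi \circ \mathbf L_n$ coordinate-wise on the free modules. Proposition~\ref{prop.des} handles the rank-one case: for the free module $A^{(I)}$, an element of $B[T,T^{-1}]\otimes_B B^{(I)} = B[T,T^{-1}]^{(I)}$ lies in $S^{(I)}$ iff each $\mathbf L_n$ of it lies in $A^{(I)}$, which is just Proposition~\ref{prop.des} applied coordinatewise.

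With these pieces in place, the argument is a diagram chase. Write $S'_M$ for the candidate submodule $\{x : \forall n\ge0,\ \mathbf L_n(x)\in M\}$ of $B[T,T^{-1}]\otimes_B N$. The inclusion $S\otimes_A M \subseteq S'_M$ follows because $S\otimes_A M$ is generated by elements $s\otimes m$ with $s\in S$, $m\in M$, and $\mathbf L_n(s\otimes m) = L_n(s)\cdot m \in A\cdot M \subseteq M$ (using Proposition~\ref{prop.des} for $L_n(s)\in A$); more precisely one uses $B$-bilinearity and the formula for $\mathbf L_n$ to check this on a spanning set, then extends by $A$-linearity of $\mathbf L_n$. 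For the reverse inclusion, take $x \in B[T,T^{-1}]\otimes_B N$ with all $\mathbf L_n(x)\in M$. Lift $x$ to an element $\tilde x \in B[T,T^{-1}]^{(I)}$ under the surjection $B[T,T^{-1}]^{(I)} \to B[T,T^{-1}]\otimes_B N$; since the image of $\mathbf L_n(\tilde x)$ in $N^{(I)}$-quotient $N$ is $\mathbf L_n(x)\in M$, and $M$ is the image of $A^{(I)}$, we can adjust $\tilde x$ by an element of $B[T,T^{-1}]^{(J)}$ (the kernel of the surjection, via $\phi$) so that $\mathbf L_n(\tilde x)\in A^{(I)}$ for all $n$—here one must use that the maps $\mathbf L_n$ intertwine $\phi$ and that the relevant compatibilities hold for every $n$ simultaneously. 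Then $\tilde x \in S^{(I)}$ by the free case, so its image $x$ lies in $S\otimes_A M$.

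The main obstacle is the adjustment step in the reverse inclusion: one is lifting an element through a tensor-product quotient and simultaneously arranging infinitely many conditions $\mathbf L_n(\tilde x)\in A^{(I)}$ for all $n\ge 0$, and it is not immediately obvious that a single lift can be chosen to satisfy all of them at once. The resolution is that only finitely many $\mathbf L_n(\tilde x)$ are potentially nonzero for a fixed $\tilde x$ of bounded $T$-degree (since $\mathbf L_n$ vanishes on $T^i\otimes x_i$ once $n$ exceeds all the $|i|$ occurring, because $\binom{i}{n}=0$ for $0\le i<n$ and one can shift by a power of $T$ as in the proof of Proposition~\ref{prop.des}), so the conditions reduce to finitely many linear conditions over the free module $A^{(I)}$, which can be met by correcting with an element of $B[T,T^{-1}]^{(J)}$ because $\phi$ is defined over $A$. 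Alternatively, and perhaps more cleanly, I would avoid the explicit lifting and instead argue that $S'_M$ is the kernel-cokernel term of a map of flat complexes and directly identify it with $S\otimes_A M$ using that $B[T,T^{-1}]^{(I)}/S^{(I)}$ is, coordinatewise, the $A$-module $\bigl(B[T,T^{-1}]/S\bigr)^{(I)}$, together with the fact that the $\mathbf L_n$ descend to this quotient and detect membership in $S$. Either way, the technical heart is the passage from "finitely generated free $M$" to "general $M$" via right-exactness, and the only subtlety is checking that the $\mathbf L_n$-conditions are preserved under this passage.
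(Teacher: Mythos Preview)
Your route is different from the paper's. The paper does not present $M$ by free modules; it works entirely on the $S$-side. From Proposition~\ref{prop.des} one has $0 \to S \to B[T,T^{-1}] \xrightarrow{L} \prod_{n\ge 0} B/A$ exact, and the paper first checks that $L$ in fact lands in and maps \emph{onto} $\bigoplus_{n\ge 0} B/A$, the key computation being $L_n\bigl(bf^{-m}(T-1)^m\bigr)=b\,\delta_{mn}$. Tensoring the resulting short exact sequence $0\to S\to B[T,T^{-1}]\to\bigoplus_{n\ge 0}B/A\to 0$ with $M$ gives, by right-exactness, $S\otimes_A M \to B[T,T^{-1}]\otimes_B N \to \bigoplus_{n\ge 0} N/M \to 0$; combined with the already-known injectivity of the first arrow, this says exactly that $S\otimes_A M=\{x:\mathbf L_n(x)\in M \text{ for all }n\}$. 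No lifting or adjustment is needed.

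Your presentation-of-$M$ argument can be completed, but the adjustment step has a genuine gap as written. First, the claim that $\mathbf L_n$ vanishes on $T^i\otimes x_i$ once $n$ exceeds all the $|i|$ is false for negative $i$ (for instance $\binom{-1}{n}=(-1)^n$); the $T^m$-shift you mention does fix this, but you must then check that multiplying by $T^{\pm m}$ preserves the condition $\mathbf L_n(\cdot)\in M$, which needs the module analogue of \eqref{eq.dot'}. More seriously, the reason you give for why the remaining finitely many conditions ``can be met by correcting with an element of $B[T,T^{-1}]^{(J)}$, because $\phi$ is defined over $A$'' is not the relevant point. What you actually need is to produce $w\in B[T,T^{-1}]^{(J)}$ with $\mathbf L_n(w)$ equal to prescribed elements of $B^{(J)}$ for $0\le n\le N$ and zero thereafter; this is exactly the surjectivity of $L$ onto $\bigoplus_{n\ge 0}B/A$ (equivalently, the identity $L_n\bigl(f^{-m}(T-1)^m\bigr)=\delta_{mn}$), which you never establish. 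Once that is supplied your argument goes through, but at that point you have reproduced the essential content of the paper's proof inside a longer diagram chase.
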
 
\begin{proof}
From Proposition \ref{prop.des} we see that there is an exact sequence 
\[
0 \to S \to B[T,T^{-1}] \xrightarrow{L}\prod_{n \ge 0} B/A,
\] 
the $n$-th component of the map $L$ being the composition
\[
B[T,T^{-1}]\xrightarrow{L_n} B \twoheadrightarrow B/A. 
\]
In fact the map $L$ takes values in $\oplus_{n\ge 0} B/A$. Indeed, for any
$g \in B[T,T^{-1}]$ there exists an integer $m$ large enough that $f^mg \in
A[T,T^{-1}]$, and then $L_n(g) \in A$ for all $n\ge m$. Moreover $L$ maps
$B[T,T^{-1}]$ \emph{onto} $\oplus_{n\ge 0} B/A$. Indeed, a simple
calculation shows that for $b \in B$ and $m\ge 0$ 
\[
L_n\bigl(bf^{-m}(T-1)^m\bigr)=
\begin{cases}
b \quad \text{if $m=n$}\\
0 \quad  \text{otherwise}. 
\end{cases}
\]
(First check that $D^{[n]}\bigl((T-1)^m\bigr)=\binom{m}{n}(T-1)^{m-n}$, say
by induction on $m$; note that this formula is valid even if $n>m$, since
$\binom{m}{n}=0$ when $0 \le m <n$.)

We now have a short exact sequence 
\[
0 \to S \to B[T,T^{-1}] \xrightarrow{L}\bigoplus_{n \ge 0} B/A \to 0
\] 
of $A$-modules. Tensoring with the $A$-module $M$, we obtain an exact
sequence 
\begin{equation}\label{eq.es}
S\otimes_A M \to B[T,T^{-1}]\otimes_A M \xrightarrow{L\otimes{id_M}}
\bigl(\bigoplus_{n \ge 0} B/A \bigr)\otimes_A M \to 0.
\end{equation}
Now \[
B[T,T^{-1}]\otimes_A M=B[T,T^{-1}]\otimes_B B \otimes_A
M=B[T,T^{-1}]\otimes_B N
\] 
and 
\[
\bigl(\bigoplus_{n \ge 0} B/A \bigr)\otimes_A
M =\bigoplus_{n \ge0} N/M.
\]
 With these identifications (and
recalling that $S\otimes_A M \to B[T,T^{-1}]\otimes_B N$ is injective), we
see that \eqref{eq.es} describes $S\otimes_A M$ as the subset of
$B[T,T^{-1}]\otimes_B N$ consisting of elements $x$ such that $\mathbf
L_n(x) \in M$ for all $n\ge 0$, and this completes the proof. 
\end{proof} 

\subsection{Comodules for $S$} Since $G$ is an affine group scheme over $A$,
the $A$-algebra $S$ is actually a commutative Hopf algebra, and we can
consider $\Rep(G)$, the category of $S$-comodules. We denote by $\Rep(G)_f$
the full subcategory of $\Rep(G)$ consisting of $S$-comodules $M$ such that
$f$ is a nonzerodivisor on the $A$-module underlying $M$. Our next goal is
to give a concrete description of $\Rep(G)_f$. 

In order to do so, we need one more construction. Let $N=\oplus_{i \in
\mathbb Z}N_i$ be a $\mathbb Z$-graded $B$-module. For each non-negative
integer $n$ we define an endomorphism $C_n:N \to N$ of the graded
$B$-module $N$ by requiring that $C_n$ be given by  multiplication by
$f^n\binom{i}{n}$ on $N_i$. Thus 
\[
C_n\bigl(\sum_{i\in \mathbb Z}x_i\bigr)=\sum_{i \in \mathbb
Z}f^n\binom{i}{n}x_i.
\] 
Here $x_i \in N_i$, all but finitely many being $0$. 

Let $\mathcal C$ be the category whose objects are pairs $(N,M)$, $N$ being
a $\mathbb Z$-graded $B$-module, and $M$ being an $A$-submodule of $N$ such
that the natural map $B\otimes_A M \to N$ is an isomorphism and such that
$C_nM \subset M$ for all $n \ge 0$. A morphism $(N,M) \to (N',M')$ is a
homomorphism $\phi:N \to N'$ of graded $B$-modules such that $\phi M \subset
M'$. 

We now define a functor $F:\Rep(G)_f \to \mathcal C$. Let $M$ be an object
of $\Rep(G)_f$. Then $N:=B\otimes_A M$ is a comodule for $B\otimes_A
S=B[T,T^{-1}]$. It is known (see \cite{sga}, Expos\'e 1) that the category of
$B[T,T^{-1}]$-comodules is equivalent to the category of $\mathbb Z$-graded
$B$-modules. Thus $N$ has a $\mathbb Z$-grading $N=\oplus_{i\in \mathbb
Z}N_i$, and the comultiplication $\Delta_N:N \to B[T,T^{-1}]\otimes_B N$ is
given by $\sum_{i \in \mathbb Z} x_i \mapsto \sum_{i \in \mathbb Z} T^i
\otimes x_i$. Since $f$ is a nonzerodivisor on $M$, the canonical map $M \to
B\otimes_A M=N$ identifies $M$ with an $A$-submodule of $N$. 

We define our functor $F$ by $FM:=(N,M)$. For this to make sense we must
check that $C_nM \subset M$ for all $n\ge 0$. Let $m \in M$, and write
$m=\sum_{i \in \mathbb Z}x_i$ in $\oplus_{i \in \mathbb Z}N_i=N$. Since the
comodule $N$ was obtained from $M$ by extension of scalars, the element
$x=\Delta_Nm=\sum_{i \in\mathbb Z}T^i\otimes x_i \in B[T,T^{-1}]\otimes_B N$
lies in the image of $S\otimes_A M \to B[T,T^{-1}]\otimes_B N$. Lemma
\ref{lem.sm} then implies that $\mathbf L_n(x)=\sum_{i \in \mathbb
Z}f^n\binom{i}{n}x_i=C_n(m)$ lies in $M$, as desired. 

\begin{theorem}\label{thm.main}
The functor $F:\Rep(G)_f \to \mathcal C$ is an equivalence of categories. 
\end{theorem}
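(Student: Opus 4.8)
The plan is to exhibit an explicit quasi-inverse functor $H:\mathcal C \to \Rep(G)_f$ and check that $H\circ F$ is literally the identity on $\Rep(G)_f$ while $F\circ H$ is naturally isomorphic to the identity on $\mathcal C$ via the structural isomorphisms $B\otimes_A M \xrightarrow{\sim} N$ that are built into the definition of $\mathcal C$.

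To construct $H$, start from an object $(N,M)$ of $\mathcal C$. First note that $f$ is a nonzerodivisor on $M$: if $fm=0$ with $m\in M$, then $m=0$ in $N$ since $f$ acts invertibly on the $B$-module $N$ and $M\hookrightarrow N$. Hence the structural isomorphism $B\otimes_A M \xrightarrow{\sim} N$ identifies $M$ with an $A$-submodule of $B\otimes_A M = B[T,T^{-1}]\otimes_B N$ compatibly with the hypotheses of Lemma \ref{lem.sm}. The $\mathbb Z$-grading on $N$ makes it a $B[T,T^{-1}]$-comodule with comultiplication $\Delta_N(\sum_i x_i)=\sum_i T^i\otimes x_i$, and for $m=\sum_i x_i\in M$ one has $\mathbf L_n(\Delta_N m)=\sum_i f^n\binom{i}{n}x_i = C_n(m)\in M$ by the defining property of $\mathcal C$; so Lemma \ref{lem.sm} gives $\Delta_N(m)\in S\otimes_A M$. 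Thus $\Delta_N$ restricts to a map $\Delta_M:M\to S\otimes_A M$, and we set $H(N,M):=(M,\Delta_M)$ and $H(\phi):=\phi|_M$ on a morphism $\phi:(N,M)\to(N',M')$.

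Next one verifies that $(M,\Delta_M)$ is genuinely an $S$-comodule. The point is that $S$ is a sub-Hopf-algebra of $B[T,T^{-1}]$ — equivalently, $\Delta_S$ and the counit of $S$ become the structure maps of $B\otimes_A S=B[T,T^{-1}]$ after applying $B\otimes_A-$ — so the coassociativity and counit identities for $\Delta_M$ are identities between maps into $S\otimes_A M$ and $S\otimes_A S\otimes_A M$. Since $S$ and $S\otimes_A S$ are flat over $A$ and $f$ is a nonzerodivisor on $M$, Lemma \ref{lem.flat} shows $f$ is a nonzerodivisor on $S\otimes_A M$ and on $S\otimes_A S\otimes_A M$, so both embed into their localizations $B[T,T^{-1}]\otimes_B N$ and $B[T,T^{-1}]\otimes_B B[T,T^{-1}]\otimes_B N$. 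Hence those identities hold as soon as they hold after $B\otimes_A-$, where they become the corresponding identities for the graded $B$-module $N$, which are immediate. The same embeddings show $\phi|_M$ is a morphism of $S$-comodules whenever $\phi$ is graded $B$-linear, so $H$ is a well-defined functor landing in $\Rep(G)_f$.

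Finally, $H\circ F=\id_{\Rep(G)_f}$ because for $M\in\Rep(G)_f$ the comodule $N=B\otimes_A M$ carries the $B[T,T^{-1}]$-comodule structure obtained by extending $\Delta_M$, so restricting $\Delta_N$ back to $M$ returns $\Delta_M$. Conversely, for $(N,M)\in\mathcal C$ the extension of scalars of $\Delta_M=\Delta_N|_M$ agrees with $\Delta_N$ on the image of $M$, hence everywhere since $M$ generates $N$ over $B$; therefore the structural isomorphism $B\otimes_A M\xrightarrow{\sim} N$ is an isomorphism of $B[T,T^{-1}]$-comodules, i.e.\ of graded $B$-modules, carrying the $M$ inside $F(H(N,M))$ onto the $M$ inside $(N,M)$. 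Naturality in $(N,M)$ is clear, so $F\circ H\cong\id_{\mathcal C}$. I expect the only step requiring real care to be the verification of the comodule axioms for $\Delta_M$ — not because it is deep, but because one must keep the several tensor-product base-change and flatness identifications straight; all the genuine content sits in the already-established Lemma \ref{lem.sm}.
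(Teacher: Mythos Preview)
Your argument is correct and is essentially the paper's own proof: you restrict $\Delta_N$ to $M$ using Lemma \ref{lem.sm}, verify the comodule axioms by the injectivity of the base-change maps (the paper records this as \eqref{eq.star}), and the only cosmetic difference is that you package essential surjectivity plus full faithfulness as an explicit quasi-inverse $H$. (One slip to fix: in your second paragraph the equality ``$B\otimes_A M = B[T,T^{-1}]\otimes_B N$'' is not what you mean---rather, $M$ sits in $N\cong B\otimes_A M$, and it is $S\otimes_A M$ that sits in $B[T,T^{-1}]\otimes_B N$ as in Lemma \ref{lem.sm}; your subsequent use of that lemma is correct regardless.)
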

\begin{proof}
Let us first show that $F$ is essentially surjective. Let $(N,M)$ be an
object in $\mathcal C$. We are going to use the comultiplication $\Delta_N:N
\to B[T,T^{-1}]\otimes_B N$ to turn $M$ into an $S$-comodule.

Since $M$ is an $A$-submodule of $N$, it is clear that $f$ is a
nonzerodivisor on $M$. As we have seen before, it follows that $f$ is a
nonzerodivisor on $S\otimes_A M$ and hence that the natural map $S\otimes_A
M \to B\otimes_A(S\otimes_AM)=B[T,T^{-1}]\otimes_B N$ identifies
$S\otimes_AM$ with an $A$-submodule of $B[T,T^{-1}]\otimes_B N$. 

Using Lemma \ref{lem.sm}, we see that our assumption that $C_nM \subset M$
for all $n\ge 0$ is simply the statement that $\Delta_N M\subset S\otimes_A
M$. In other words there exists a unique $A$-module map $\Delta_M:M \to
S\otimes_A M$ such that $\Delta_M$ yields $\Delta_N$ after extending scalars
from $A$ to $B$. 

We claim that $\Delta_M$ makes $M$ into an $S$-comodule. For this we must
check the commutativity of two diagrams, and this follows from the
commutativity of these diagrams after extending scalars from $A$ to $B$,
once one notes that for any two $A$-modules $M_1,M_2$ on which $f$ is a
nonzerodivisor 
\begin{equation}\label{eq.star}
\Hom_A(M_1,M_2)=\{\phi \in \Hom_B(B\otimes_A M_1,B\otimes_A M_2)\,:\,
\phi(M_1)\subset M_2\}.
\end{equation}
Here of course we are identifying $M_1,M_2$ with $A$-submodules of
$B\otimes_A M_1$,$B\otimes_A M_2$ respectively. (At one point we need that
$f$ is a nonzerodivisor on $S\otimes_AS\otimes_AM$, which is true since
$S\otimes_A S$ is flat over $A$.)

As $F$ takes $M$ to $(N,M)$, we are done with essential surjectivity. It is
easy to see that $F$ is fully faithful; this too uses \eqref{eq.star}.
\end{proof} 

\subsection{Principal ideal domains $A$} \label{sec.pid}
One defect of the theorem we just
proved is that it only describes those $G$-modules on which $f$ is a
nonzerodivisor. When $A$ is a principal ideal domain, as we assume for the
rest of this subsection, we can do better. Now $f$ is simply any non-zero
element of $A$. As a consequence of our theorem we obtain an equivalence of
categories between the category $\Rep(G)_{flat}$ of $G$-modules $M$ such
that $M$ is flat as $A$-module and the full subcategory of $\mathcal C$
consisting of pairs $(N,M)$ for which $M$ is a flat $A$-module (in which
case $N\cong B\otimes_A M$ is necessarily a flat $B$-module). 

The next lemma is a variant of \cite[Prop. 3]{serre}.
\begin{lemma}
Let $A$ be a principal ideal domain, let $C$ be a flat $A$-coalgebra, and
let $E$ be a $C$-comodule. Then there exists a short exact sequence of
$C$-comodules
\[
0 \to F_1 \to F_0 \to E \to 0
\] 
in which $F_0$,$F_1$ are flat as $A$-modules. 
\end{lemma}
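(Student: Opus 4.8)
The plan is to proceed exactly as in Serre's argument for modules over a PID, adapting it to the comodule setting. First I would choose a set of generators of $E$ as an $A$-module — or rather, since we want a comodule surjection, a set of elements whose $A$-span, after applying the coaction, still lands in a controlled place. The key observation is that any comodule $E$ over a coalgebra $C$ is the union of its finitely generated (indeed finite-dimensional, when $A$ is a field; finitely generated over $A$ in general) sub-comodules: given $e \in E$, write $\Delta_E(e) = \sum_j c_j \otimes e_j$ with $c_j \in C$ and $e_j \in E$; then the $A$-submodule of $E$ generated by the $e_j$ together with $e$ is a sub-comodule (this uses coassociativity and counit). So $E$ is filtered by sub-comodules that are finitely generated over $A$.

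Next I would build $F_0$. Take a free $A$-module $L$ (possibly of infinite rank) with a surjection of $A$-modules $L \twoheadrightarrow E$; the point is to promote this to a comodule surjection. The standard device is the cofree comodule: $F_0 := C \otimes_A L$ with comultiplication $\Delta_C \otimes \id_L$. Since $C$ is flat over $A$ and $L$ is free, $F_0$ is flat (in fact free) as an $A$-module. There is a canonical comodule map $F_0 = C \otimes_A L \to C \otimes_A E \xrightarrow{\epsilon \otimes \id} E$ — wait, more precisely, one uses that $L \to E$ is $A$-linear, composes $C\otimes_A L \to C \otimes_A E$, and then uses the coaction-counit structure; the resulting map $F_0 \to E$, $c \otimes \ell \mapsto $ (apply counit appropriately), is a surjective comodule homomorphism because already on the level of $A$-modules $L \to E$ was surjective and $\epsilon$ is surjective. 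Concretely, $E \xrightarrow{\Delta_E} C\otimes_A E \xleftarrow{} C \otimes_A L$ and the composite $C \otimes_A L \to C \otimes_A E \to E$ (last map $= (\epsilon \otimes \id)$, which is a retraction of $\Delta_E$) is the desired surjection.

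Then set $F_1 := \ker(F_0 \to E)$. The final step is to check $F_1$ is flat as an $A$-module. Here is where the PID hypothesis is essential and where I expect the only real subtlety: $F_1$ is an $A$-submodule of the flat (free) $A$-module $F_0$, and over a PID a submodule of a flat module is flat, equivalently torsion-free. So $F_1$ is automatically $A$-flat, and it is a sub-comodule of $F_0$ since the kernel of a comodule map is a sub-comodule (using flatness of $C$ so that $C \otimes_A (-)$ is left exact, hence $\Delta$ restricts to $F_1 \to C \otimes_A F_1$). This gives the short exact sequence $0 \to F_1 \to F_0 \to E \to 0$ of $C$-comodules with $F_0, F_1$ both $A$-flat.

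The main obstacle, such as it is, is bookkeeping rather than mathematics: verifying carefully that the counit-induced map $C \otimes_A L \to E$ is genuinely a homomorphism of $C$-comodules (one must match the two coaction squares, which comes down to coassociativity of $\Delta_C$ and the counit axiom), and that flatness of $C$ lets all the relevant tensor products be exact so that kernels behave well. Everything else — existence of the free cover $L \twoheadrightarrow E$, torsion-freeness of submodules over a PID — is standard. I would also remark that this construction is functorial enough to be used later for the resolution statements about $\Rep(G_k)$ alluded to in the introduction.
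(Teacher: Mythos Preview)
There is a genuine gap: the map you propose, namely $\phi:=(\epsilon\otimes\id_E)\circ(\id_C\otimes p):C\otimes_A L\to E$, is \emph{not} a comodule homomorphism in general. The comodule $C\otimes_A L$ (with coaction $\Delta_C\otimes\id_L$) is cofree, so it represents maps \emph{into} it, not out of it; there is no free--forgetful left adjoint here. Concretely, take $A$ a field, $C=A[T,T^{-1}]$, $E=A$ with coaction $1\mapsto T\otimes 1$ (degree~$1$), $L=A$, and $p=\id$. Then $\phi(T^i\otimes 1)=\epsilon(T^i)\cdot 1=1$, so $(\id_C\otimes\phi)\circ(\Delta_C\otimes\id_L)$ sends $T^i\otimes 1$ to $T^i\otimes 1$, whereas $\Delta_E\circ\phi$ sends it to $T\otimes 1$. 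These disagree for $i\ne 1$. So the step you flag as ``bookkeeping'' actually fails.

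The paper (following Serre) fixes this by going the other way. The coaction $\Delta_E:E\to C\otimes_A E$ is an \emph{injective} comodule map, so one views $E$ as a subcomodule of $C\otimes_A E$. The map $\id_C\otimes p:C\otimes_A F\to C\otimes_A E$ \emph{is} a comodule map (both sides carry the cofree structure), and one sets $F_0:=(\id_C\otimes p)^{-1}(E)$, a subcomodule of $C\otimes_A F$ since it is the kernel of the composite $C\otimes_A F\to C\otimes_A E\to (C\otimes_A E)/E$. Restricting $\id_C\otimes p$ gives a surjective comodule map $F_0\twoheadrightarrow E$, and then $F_1:=\ker(F_0\to E)$. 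Your final step---that $F_0$ and $F_1$ are flat because submodules of the flat module $C\otimes_A F$ over a PID are torsion-free hence flat---is correct and is exactly how the paper concludes. (The opening paragraph on local finiteness is not needed for either argument.)
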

\begin{proof}
We imitate Serre's proof. Recall (see 1.2 in \emph{loc.\ cit.}) that for any
$A$-module $M$ the map $\Delta\otimes id_M:C\otimes_A M\to C\otimes_A C
\otimes_A M$ ($\Delta$ being the comultiplication for $C$) gives $C
\otimes_A M$ the structure of $C$-comodule, and that (see 1.4 in 
\emph{loc.\ cit.}) the comultiplication map $\Delta_E:E \to C\otimes_A E$ is
an injective comodule map when $C\otimes_A E$ is given the comodule
structure just described. We use $\Delta_E$ to identify $E$ with a
subcomodule of $C\otimes_A E$. 

Now choose a surjective $A$-linear map $p:F \to E$, where $F$ is a free
$A$-module. Let $F_0$ denote the preimage of $E$  under the surjective
comodule map $id\otimes p:C\otimes_A F \twoheadrightarrow C\otimes_A E$.
Since $F_0$ is the kernel of 
\[
C\otimes_A F \to C \otimes_A E \to (C\otimes_ A E)/E,
\] 
it is a subcomodule of $C\otimes_A F$, and $id\otimes p$ restricts to a
surjective comodule map $F_0 \to E$, whose kernel we denote by $F_1$. Since
$C$ and $F$ are flat, so too are $C\otimes_A F$, $F_0$, and $F_1$, and we are
done. We used  that for principal ideal domains, a module is flat
if and only if it is torsion-free, and being torsion-free is a property that
is inherited by submodules. 
\end{proof} 

Returning to our Hopf algebra $S$, we see that any $G$-module $E$ has a
resolution $0\to F_1\to F_0 \to E \to 0$ in which $F_1$,$F_0$ are objects of
$\Rep(G)_{flat}$ and hence are described by our theorem. We conclude that 
 $E$ has the following form. There exist an injective
homomorphism $\phi:N \to N'$ of graded $B$-modules and flat $A$-submodules
$M$,$M'$ of
$N$,$N'$ respectively such that $\phi M \subset M'$ and $(N,M),(N',M') \in
\mathcal C$, having the property that $E$ is isomorphic to $M'/\phi M$ as
$G$-module. 

\subsection{A special case} \label{sec.asf}
When $A$ is a $\mathbb Q$-algebra, the category
$\mathcal C$ is very simple. Indeed, there is a polynomial $P_n \in
\mathbb Q[U]$ of degree $n$ such that $\binom{i}{n}=P_n(i)$, and therefore
$C_n=Q_n(C)$, where $C=C_1$ and $Q_n:=f^nP_n(f^{-1}U) \in A[U]$. Therefore
$\mathcal C$ is the category of pairs $(N,M)$ consisting of a $\mathbb
Z$-graded $B$-module
$N$ and an $A$-submodule $M$ of $N$ such that the natural map $B\otimes_A M
\to N$ is an isomorphism and such that $CM \subset M$, where $C$ is the
endomorphism of the graded module $N=\oplus_{i \in\mathbb Z} N_i$ given by
multiplication by $fi$ on $N_i$. 

When $A$ is the formal power series ring $\mathcal O:=\mathbb
C[[\varepsilon]]$, and $f =\varepsilon^k$ (for some non-negative integer $k$)
our constructions yield a group scheme $G$ over $\mathcal O$ such that
$G(\mathcal O)=\{t \in \mathcal O^\times\,:\, t\equiv
1\mod{\varepsilon^k}\}$, and the category of representations of $G$ on free
$\mathcal O$-modules of finite rank is equivalent to the category of pairs
$(V,M)$, where $V$ is a finite dimensional graded vector space over
$F:=\mathbb C((\varepsilon))$ and $M$ is an $\mathcal O$-lattice in $V$ such
that $CM\subset M$, where $C$ is given by multiplication by $i\varepsilon^k$
on the $i$-th graded piece of $V$. It is amusing to note that for fixed
$V$, the space of all 
$M$ satisfying $CM\subset M$ is an affine Springer fiber, which, when all the
non-zero graded pieces of $V$ are one-dimensional, is actually one of the
affine Springer fibers studied at some length in \cite{gkm}, where it was
shown to be paved by affine spaces.  Finally, since
$\mathcal O$ is a principal ideal domain, the results in \ref{sec.pid} give
a concrete description of all $G$-modules. 

\section{Certain Hopf algebras and their comodules}
Throughout this section $A$ is a commutative ring and $B$ is a commutative
algebra such that the canonical homomorphism $B \otimes_A B \to B$
( given by $b_1\otimes b_2 \mapsto b_1b_2$) is an isomorphism. For example
$B$ might be  of the form $S^{-1}A/I$ for some multiplicative subset $S$
of
$A$ and some ideal $I$ in $S^{-1}A$.

Let $N$ be a $B$-module. Then the canonical $B$-module map $B\otimes_A N \to
N$ (given by $b\otimes n \mapsto bn$) is an isomorphism. It follows that the
canonical $A$-module homomorphism $N \to B \otimes_A N$ (given by $n \mapsto
1\otimes n$) is actually an isomorphism of $B$-modules (since $N\to
B\otimes_A N \to N$ is the identity). 

Moreover, for any two $B$-modules $N_1,N_2$, we have isomorphisms 
\begin{equation}\label{eq.hom}
\Hom_B(N_1,N_2) \cong \Hom_A(N_1,N_2)
\end{equation}
and 
\begin{equation}\label{eq.tensor}
N_1\otimes_A N_2 \cong N_1\otimes_B N_2.
\end{equation}

\subsection{General remarks on Hopf algebras and their comodules} Let $S$ be
a Hopf algebra over $A$. The composition $A\to S \to A$ of the unit and
counit is the identity, and therefore there is a direct sum decomposition
$S=A\oplus S_0$ of $A$-modules, where $S_0$ is by definition the kernel of
the counit $S \to A$. In this subsection all tensor products will be taken
over $A$ and the subscript $A$ will be omitted. 

We denote by $\Delta:S \to S\otimes S$ the comultiplication for $S$. The
counit axioms imply that $\Delta$ takes the form $\Delta(a+s_0)=a+s_0\otimes
1+1\otimes s_0+\bar\Delta(s_0)$, when we identify $S$ with
$A\oplus S_0$ and $S\otimes S$ with $A \oplus (S_0\otimes A) \oplus
(A\otimes S_0) \oplus (S_0\otimes S_0)$. Here $\bar\Delta$ is a uniquely
determined $A$-module map $S_0 \to S_0\otimes S_0$. 

For any $S$-comodule $M$ with comultiplication $\Delta_M:M \to S\otimes M$
the counit axiom for $M$ implies that $\Delta_M(m)=1\otimes m +
\bar\Delta_M(m)$ for a uniquely determined $A$-module map 
\[
\bar\Delta_M:M \to S_0\otimes M.
\] 
In this way we obtain an equivalence of categories between $S$-comodules and
$A$-modules $M$ equipped with an $A$-linear map $\bar\Delta_M:M \to S_0
\otimes M$ such that the diagram 
\begin{equation}\label{cd}
\begin{CD}
M @>\bar\Delta_M>> S_0 \otimes M \\
@V\bar\Delta_MVV @V\bar\Delta\otimes id VV \\
S_0\otimes M @>id\otimes\bar\Delta_M >> S_0\otimes S_0 \otimes M
\end{CD}
\end{equation}
commutes.

\subsection{Hopf algebras for $B$ give Hopf algebras for $A$}  Let $S$ be a
Hopf algebra over $B$. As in the previous subsection we decompose $S$ as
$B\oplus S_0$. It is easy to see that there is a unique Hopf algebra
structure on $R:=A\oplus S_0$ such that the unit and counit for $R$ are the
obvious maps $A \hookrightarrow R$ and $R \twoheadrightarrow A$ and such
that the Hopf algebra structure on $B\otimes_A R$ agrees with the given one
on $S$ under the natural $B$-module isomorphism $B\otimes_A R \cong S$. What
makes this work is \eqref{eq.tensor}, a consequence of our assumption that
$B\otimes_A B \to B$ is an isomorphism, so that, for example, $S_0 \otimes_B
S_0 \cong S_0 \otimes_A S_0$. The comultiplications $\Delta_R$,$\Delta_S$ on
$R$,$S$ respectively are given by 
\begin{align}
\Delta_R(a+s_0)&=a+s_0\otimes
1+1\otimes s_0+\bar\Delta(s_0)\\
\Delta_S(b+s_0)&=b+s_0\otimes
1+1\otimes s_0+\bar\Delta(s_0)
\end{align}
and similar considerations apply to the multiplication maps $R\otimes_A R
\to R$, $S \otimes_B S \to S$ and the antipodes $R \to R$, $S \to S$. 

\begin{proposition}
The category of $R$-comodules is equivalent to the category of $A$-modules
$M$ equipped with an $S$-comodule structure on $N:=B\otimes_A M$. 
\end{proposition}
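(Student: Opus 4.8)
The plan is to translate both categories into the ``reduced comultiplication'' language of the previous subsection and then observe that base change along $A \to B$ matches everything up. Recall from that subsection that giving an $R$-comodule is the same as giving an $A$-module $M$ together with an $A$-linear map $\bar\Delta_M:M \to S_0\otimes_A M$ for which the two composites $M\to S_0\otimes_A S_0\otimes_A M$ in \eqref{cd} (taken for $R$) agree, and, in exactly the same way, giving an $S$-comodule is the same as giving a $B$-module $N$ together with a $B$-linear $\bar\Delta_N:N \to S_0\otimes_B N$ satisfying the analogous condition over $B$; here the reduced comultiplications $S_0 \to S_0\otimes_A S_0$ and $S_0 \to S_0\otimes_B S_0$ are identified by \eqref{eq.tensor}.

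The one thing to notice is that, because $S_0$ is a $B$-module, for any $A$-module $M$ and $N:=B\otimes_A M$ there are canonical $B$-module isomorphisms
\[
S_0\otimes_A M \cong S_0\otimes_B N,\qquad S_0\otimes_A S_0\otimes_A M \cong S_0\otimes_B S_0\otimes_B N,
\]
obtained by combining \eqref{eq.tensor} with the identity $P\otimes_A M \cong P\otimes_B(B\otimes_A M)$ valid for any $B$-module $P$. Each module appearing here is a $B$-module, so the canonical map $P \to B\otimes_A P$ is an isomorphism for it, and consequently $\Hom_A(M,P)\to\Hom_B(N,P)$ is a bijection (an $A$-linear map going to its unique $B$-linear extension, a $B$-linear map going to its restriction along $M\to N$).

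Now define the functor by sending an $R$-comodule $(M,\bar\Delta_M)$ to the $A$-module $M$ equipped with the $S$-comodule structure on $N$ whose reduced comultiplication $\bar\Delta_N$ is the $B$-linear extension of $\bar\Delta_M$ through the first displayed identification (equivalently, $\bar\Delta_N=B\otimes_A\bar\Delta_M$ read through the identifications), and sending a morphism of $R$-comodules to the same underlying $A$-linear map. That $\bar\Delta_N$ satisfies coassociativity over $B$ follows by applying $B\otimes_A(-)$ to \eqref{cd} and using the identifications, so the functor is well defined.

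Conversely, an $A$-module $M$ with an $S$-comodule structure $\bar\Delta_N$ on $N$ is sent back to $(M,\bar\Delta_M)$, with $\bar\Delta_M$ the restriction of $\bar\Delta_N$ along $M\to N$; for this to be an $R$-comodule one needs the converse implication, that \eqref{cd} commutes over $A$ --- and this is the only point that requires an argument, though not much of one. It works because the two composites $M\to S_0\otimes_A S_0\otimes_A M$ are $A$-linear maps into a $B$-module, hence are equal as soon as their $B$-linear extensions are, and those extensions are precisely the two (equal) composites in the diagram over $B$. The morphism statement --- that $\phi\colon M\to M'$ is a map of $R$-comodules iff $B\otimes_A\phi$ is a map of $S$-comodules --- is proved identically, the relevant equality of maps taking place in the $B$-module $S_0\otimes_A M'$; this gives full faithfulness. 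Finally, the two functors are mutually quasi-inverse by construction: one composite is the identity on the nose, and the other returns $M$ with the same $S$-comodule structure, since a $B$-linear map into $S_0\otimes_B N$ is determined by its restriction to $M$.
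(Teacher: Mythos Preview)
Your argument is correct and follows essentially the same route as the paper's own proof: both reduce to the ``reduced comultiplication'' description from the previous subsection and then use that, since $S_0$ is a $B$-module, the identifications $S_0\otimes_A M\cong S_0\otimes_B N$ (and their iterates) turn the data and coassociativity diagram for $\bar\Delta_M$ over $A$ into those for $\bar\Delta_N$ over $B$. You have simply spelled out in more detail the bijection $\Hom_A(M,P)\cong\Hom_B(N,P)$ for $B$-modules $P$ and the construction of the mutually inverse functors, which the paper leaves implicit.
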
 
\begin{proof}
We have already observed that giving an $R$-comodule is the same as giving
an $A$-module $M$ equipped with an $A$-module map $\bar\Delta_M:M\to S_0
\otimes_A M$ such that \eqref{cd} commutes. Since $S_0$ is a $B$-module and
$B\otimes_A B \cong B$, giving $\bar\Delta_M$ such that \eqref{cd} commutes
is the same as giving a $B$-module map $\bar\Delta_N:N \to S_0\otimes_BN$
such that 
\begin{equation*}
\begin{CD}
N @>\bar\Delta_N>> S_0 \otimes_B N \\
@V\bar\Delta_NVV @V\bar\Delta\otimes id VV \\
S_0\otimes_B N @>id\otimes\bar\Delta_N >> S_0\otimes_B S_0 \otimes_B N
\end{CD}
\end{equation*}
commutes, or, in other words, giving an $S$-comodule structure on $N$. 
\end{proof} 

\subsection{Special case}\label{sec.dvr}  
Let $\mathcal O$ be a valuation ring and $F$ its
field of fractions. Let $G$ be an affine group scheme over $F$ and let $S$ be
the corresponding commutative Hopf algebra over $F$. Decompose $S$ as $F
\oplus S_0$ and define a commutative Hopf algebra $R$ over $\mathcal O$ by
$R:=\mathcal O \oplus S_0$. Corresponding to $R$ is an affine group scheme
$\tilde G$ over $\mathcal O$, and giving a representation of $\tilde G$
(that is, an $R$-comodule) is the same as giving an $\mathcal O$-module $M$
together with an $S$-comodule structure on $F\otimes_\mathcal O M$. 

For example, when $G$ is the multiplicative group $\mathbb G_m$, the Hopf
algebra $R$ is $\{\sum_{i \in \mathbb Z} a_iT^i \in F[T,T^{-1}]\,:\,\sum_{i
\in \mathbb Z} a_i \in \mathcal O\}$, which is easily seen to be the union
of the Hopf subalgebras $S_k$ discussed in the introduction.

\end{document}